\documentclass[11pt,letterpaper,reqno]{amsart}
\usepackage{tikz}
\usetikzlibrary{positioning, shapes.geometric, arrows.meta, calc}
\usepackage{amssymb}
\usepackage{amsmath}
\usepackage{amsthm}
\usepackage{amsfonts}
\usepackage{bbm}
\usepackage{enumitem}
\usepackage{pgfplots}
\pgfplotsset{compat=1.18}
\usepackage{booktabs}
\usepackage{graphicx}
\usepackage[T1]{fontenc}
\usepackage{doi}
\usepackage{float}
\usepackage{bookmark}
\usepackage{hyperref}
\hypersetup{pdfstartview={FitH}}

\newtheorem{theorem}{Theorem}[section]
\newtheorem{lemma}[theorem]{Lemma}
\newtheorem{proposition}[theorem]{Proposition}
\newtheorem{corollary}[theorem]{Corollary}

\newtheorem{problem}[theorem]{Problem}

\theoremstyle{definition}

\newtheorem{remark}[theorem]{Remark}

\numberwithin{equation}{section}

\newcommand{\D}{\mathbb D}
\newcommand{\Pclass}{\mathcal P}
\newcommand{\Log}{\operatorname{Log}}
\newcommand{\C}{\mathbb{C}}

\begin{document}

\title[Optimal universal growth]{Optimal universal growth for integral means of normalized logarithmic derivatives in the Carath\'eodory class}

\author[Y.~He]{Yixin He}
\address{School of Mathematical Sciences, Fudan University, Shanghai 200433, P.~R.~China}
\email{hyx717math@163.com}

\author[Q.~Tang]{Quanyu Tang}
\address{School of Mathematics and Statistics, Xi'an Jiaotong University, Xi'an 710049, P.~R.~China}
\email{tang\_quanyu@163.com}

\author[T.~Zhang]{Teng Zhang}
\address{School of Mathematics and Statistics, Xi'an Jiaotong University, Xi'an 710049, P.~R.~China}
\email{teng.zhang@stu.xjtu.edu.cn}

\subjclass[2020]{Primary 30C45; Secondary 30H10}


\keywords{Carath\'eodory class, normalized logarithmic derivative, integral means, growth scale}

\begin{abstract}
We determine the optimal universal growth scale for the integral means of normalized logarithmic derivatives in the Carath\'eodory class. This resolves a problem of D.~K.~Thomas.
\end{abstract}

\maketitle

\section{Introduction}\label{sec:intro}

Throughout the paper, \(\D:=\{z\in\C:|z|<1\}\) denotes the open unit disc, and
\(\overline{\D}:=\{z\in\C:|z|\le 1\}\) its closure. We write \(H(\D)\) for the
space of holomorphic functions on \(\D\), and
\[
\Pclass:=\{p\in H(\D): \Re p(z)>0 \text{ for all } z\in\D\}
\]
for the non-normalized Carath\'eodory class. In particular, we do not impose the
normalization \(p(0)=1\). Since all extremal constructions in this paper in fact
satisfy \(p(0)=1\), this convention has no effect on the substance of our results.
We also write \(A(r)\asymp B(r)\) as \(r\to1^-\) if there exist constants
\(c,C>0\) such that
\[
c\,B(r)\le A(r)\le C\,B(r)
\]
for all \(r\) sufficiently close to \(1\).

The Carath\'eodory class \(\Pclass\) is a classical and central object in geometric function theory. By the Herglotz representation, each \(p\in\Pclass\) may be written in the form
\[
p(z)= i\,\operatorname{Im} p(0)+\int_{\partial\D}\frac{\zeta+z}{\zeta-z}\,d\mu(\zeta),
\qquad z\in\D,
\]
for a finite positive Borel measure \(\mu\) on \(\partial\D\); see, for example, \cite[p.~21, Theorem~3.2]{Sim19}. Equivalently, after a Cayley transform, \(\Pclass\) corresponds to the Schur class of analytic self-maps of \(\D\). Thus functions with positive real part lie at the intersection of the analytic, measure-theoretic, and Hardy-space aspects of the subject, and they also provide the Herglotz data in Loewner--Kufarev theory; see, for example, \cite{Dur83,Hay61,HT80,Sim19,TTV18}.

The importance of \(\Pclass\) for univalent function theory is especially transparent
through the standard differential characterizations of geometric subclasses. A normalized
analytic function \(f\) is \emph{starlike} if and only if
\[
\operatorname{Re}\frac{z f'(z)}{f(z)}>0
\qquad (z\in\D),
\]
and convex if and only if
\[
\operatorname{Re}\left(1+\frac{z f''(z)}{f'(z)}\right)>0
\qquad (z\in\D).
\]
Accordingly, the Carath\'eodory class is not merely an auxiliary family: it parametrizes
two of the most basic subclasses of analytic functions on \(\D\), namely starlike and convex functions, and
therefore governs many coefficient, growth, and boundary questions in geometric
function theory \cite{Dur83,TTV18}.

Against this background, it is natural to consider the \emph{normalized logarithmic derivative} \cite{Gri91}
\[
z\frac{p'(z)}{p(z)}.
\]
There are several reasons for this. First, \(z p'(z)/p(z)\) is precisely the quantity that
characterizes starlikeness. Second, on the circle \(z=re^{i\theta}\) one has
\[
\frac{d}{d\theta}\Log p(re^{i\theta})
= i\,z\frac{p'(z)}{p(z)},
\]
so the integral means of \(z p'(z)/p(z)\) measure the angular oscillation of \(\Log p\) on
\(|z|=r\). Third, if
\[
\Log p(z)=a_0+\sum_{n=1}^{\infty}a_n z^n,
\]
then
\[
z\frac{p'(z)}{p(z)}=\sum_{n=1}^{\infty} n a_n z^n,
\]
and Parseval's identity converts its \(L^2\)-integral means into the weighted square sum
\(\sum n^2|a_n|^2 r^{2n}\). This makes \(z p'(z)/p(z)\) the natural normalized quantity for
the Hardy-space estimates carried out below.

The choice of \(z p'(z)/p(z)\) is  also historically natural in view of Thomas's work. Already in the late 1960s and 1970s, starlike 
functions were studied through area theorems, order questions, and asymptotic integral formulas.
In particular, Holland and Thomas \cite{HT69} proved an area theorem for starlike  functions,
Holland and Thomas \cite{HT71} investigated the order of a starlike  function, and London and
Thomas \cite{LT76} studied integrals of the form
\[
H(r)=\int_0^{2\pi}|f(re^{i\theta})|^\sigma
|F(re^{i\theta})|^\tau
\bigl(\operatorname{Re}F(re^{i\theta})\bigr)^\kappa\,d\theta,
\quad
F(z)=\frac{z f'(z)}{f(z)},\quad \sigma,\tau,\kappa\in\mathbb{R}
\]
explicitly emphasizing that arc length, area, and integral means are recovered from such
integrals by suitable choices of the parameters. In recent years, Thomas \cite{Tho16} revisited logarithmic-coefficient problems for close-to-convex functions, and Thomas, Tuneski, and Vasudevarao \cite{TTV18} published a modern monograph on univalent functions.

Integral means for functions with positive real part were already studied by Hayman \cite{Hay61} and by Holland--Twomey \cite{HT80}. Thomas asked the following question about the  growth law over the whole Carath\'eodory class, rather than for a fixed function or for a class determined by a prescribed zero sequence. This question was later recorded in Hayman and Lingham's well-known problem collection \emph{Research Problems in Function Theory} \cite[p.~182, Problem~6.123]{HL19}.

\begin{problem}[Thomas]\label{prob:HL}
Let \(\Pclass\) be the non-normalized Carath\'eodory class. For \(p\in\Pclass\), is it true that
\[
\int_0^{2\pi}\left|\frac{z p'(z)}{p(z)}\right|^2\,d\theta
=O\left(\frac{1}{1-r}\right),
\qquad z=re^{i\theta},\quad r\to1^-?
\]
If not, what is the correct rate of growth?
\end{problem}

For later use, we write
\[
I_p(r):=\int_0^{2\pi}\left|\frac{z p'(z)}{p(z)}\right|^2\,d\theta,
\qquad z=re^{i\theta},\quad 0<r<1.
\]
Problem~\ref{prob:HL} concerns the full Carath\'eodory class
and asks for a universal growth law for the family
\[
\{I_p:p\in\Pclass\}.
\]

The M\"obius map
\[
p_0(z):=\frac{1+z}{1-z}
\]
shows that the exponent \(1\) in Problem~\ref{prob:HL} is not accidental.
Indeed,
\[
\frac{z p_0'(z)}{p_0(z)}=\frac{2z}{1-z^2},
\]
and hence
\[
I_{p_0}(r)=4r^2\int_0^{2\pi}\frac{d\theta}{|1-r^2e^{2i\theta}|^2}
=\frac{8\pi r^2}{1-r^4}
\asymp \frac{1}{1-r}.
\]
Thus \((1-r)^{-1}\) growth does occur inside the class \(\Pclass\).

Our results show, however, that this behavior is not universal. The key observation is that the Carath\'eodory condition places the image of \(p\) in the right half-plane, so that \(\Log p\) has bounded imaginary part. This yields a uniform \(H^2\)-estimate for the nonconstant part of \(\Log p\), and hence a class-wide bound for \(\frac{z p'(z)}{p(z)}\) via Parseval's identity. In particular, we obtain a uniform \(O((1-r)^{-2})\) estimate over \(\Pclass\), and for each fixed function a standard tail argument yields the sharper little-\(o\) refinement. The optimality statement is established by explicit lacunary constructions showing that no smaller universal gauge can hold over the whole class.

We now state our main results. We begin with the universal upper bound and the
corresponding pointwise refinement for each fixed function.

\begin{theorem}\label{thm:upper}
For every \(p\in\Pclass\) and \(0<r<1\),
\[
I_p(r)\le \frac{\pi^3 e^{-2}}{(1-r)^2}.
\]
Moreover, for each fixed \(p\in\Pclass\),
\[
I_p(r)=o\bigl((1-r)^{-2}\bigr)
\qquad (r\to1^-).
\]
\end{theorem}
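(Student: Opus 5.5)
The plan is to work with $\Log p$ instead of $p$. Since $\Re p>0$, the principal branch $\Log p$ is holomorphic on $\D$ and $\arg p=\operatorname{Im}\Log p$ takes values in $(-\pi/2,\pi/2)$. Write $\Log p(z)=a_0+\sum_{n\ge1}a_nz^n$ and set $g:=\Log p-a_0$. As noted in the introduction, $zp'/p=\sum_{n\ge1}na_nz^n$, so Parseval gives
\[
I_p(r)=2\pi\sum_{n\ge1}n^2|a_n|^2r^{2n}.
\]
The argument therefore has two steps: a uniform bound on $\sum|a_n|^2$, and a bound on the weights $n^2r^{2n}$.

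\emph{Coefficient bound.} Fix $\rho<1$ and put $v:=\operatorname{Im} g=\arg p-\operatorname{Im}a_0$. On $|z|=\rho$ we have
\[
v=\frac{1}{2i}\sum_{n\ge1}\bigl(a_nz^n-\overline{a_n}\,\overline{z}^{\,n}\bigr),
\]
so $v$ has mean zero on that circle and
\[
\frac1{2\pi}\int_0^{2\pi}|v(\rho e^{i\theta})|^2\,d\theta=\frac12\sum_{n\ge1}|a_n|^2\rho^{2n}.
\]
Because the mean of $v$ is zero, this integral is the variance of $\theta\mapsto\arg p(\rho e^{i\theta})$. That function takes values in an interval of length $\pi$, and Popoviciu's inequality says such a variance is at most $(\pi/2)^2=\pi^2/4$. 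Hence $\sum_n|a_n|^2\rho^{2n}\le\pi^2/2$ for every $\rho<1$. Letting $\rho\to1^-$ gives
\[
\sum_{n\ge1}|a_n|^2\le\frac{\pi^2}{2}.
\]
Getting the constant right here is the one delicate point. Using the crude bound $|v|\le\pi$ instead of the variance bound would lose a factor of $4$ and miss $\pi^3e^{-2}$.

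\emph{Weight bound.} For $0<r<1$, maximize $x^2r^{2x}$ over $x>0$. The maximum is at $x=-1/\log r$, with value $e^{-2}/(\log r)^2$. Since $-\log r\ge 1-r$, this gives
\[
\sup_{n\ge1} n^2r^{2n}\le \frac{e^{-2}}{(1-r)^2}.
\]
Combining the two steps,
\[
I_p(r)\le 2\pi\cdot\frac{\pi^2}{2}\cdot\frac{e^{-2}}{(1-r)^2}=\frac{\pi^3e^{-2}}{(1-r)^2},
\]
which is the stated bound.

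\emph{Little-$o$ refinement.} Fix $p$ and $\varepsilon>0$. Since $\sum|a_n|^2<\infty$, choose $N$ with $\sum_{n>N}|a_n|^2<\varepsilon$. Split $I_p(r)$ into the terms $n\le N$ and $n>N$. The head is at most $2\pi N^2\sum|a_n|^2$, which is bounded independently of $r$. The tail is at most $2\pi\varepsilon\, e^{-2}(1-r)^{-2}$ by the weight bound. Multiplying by $(1-r)^2$ and letting $r\to1^-$ gives $\limsup_{r\to1^-}(1-r)^2I_p(r)\le 2\pi e^{-2}\varepsilon$. Since $\varepsilon$ is arbitrary, $I_p(r)=o\bigl((1-r)^{-2}\bigr)$.
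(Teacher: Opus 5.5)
Your proof is correct and follows the paper's argument essentially step for step: Parseval for $zp'/p=\sum na_nz^n$, the coefficient bound $\sum_{n\ge1}|a_n|^2\le\pi^2/2$ coming from $|\operatorname{Im}\Log p|<\pi/2$, the weight bound $n^2r^{2n}\le e^{-2}(1-r)^{-2}$, and the same head/tail splitting for the little-$o$ statement. The only cosmetic difference is that you center $\operatorname{Im}\Log p$ and invoke Popoviciu's inequality, whereas the paper bounds the uncentered second moment $(\operatorname{Im}a_0)^2+\frac12\sum|a_n|^2r^{2n}\le\pi^2/4$ directly, which gives the same constant without the variance interpretation.
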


The next result shows that the exponent \(2\) is sharp on the power scale:
although every fixed function satisfies the little-\(o\) estimate above, no
uniform bound of the form \(O((1-r)^{-\beta})\) can hold over the whole class
when \(\beta<2\).

\begin{theorem}\label{thm:sharp}
There exists a function \(p_*\in\Pclass\) such that, for every real
\(\beta<2\),
\[
I_{p_*}(r)\neq O\bigl((1-r)^{-\beta}\bigr)
\qquad (r\to1^-).
\]
\end{theorem}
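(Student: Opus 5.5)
The plan is to construct \(p_*=\exp(g)\), where \(g\) is holomorphic on \(\D\) with \(|\Im g|<\pi/2\), so that \(\Re p_*>0\). With \(g=\sum_{n\ge1}a_nz^n\), Parseval gives
\[
I_{p_*}(r)=2\pi\sum_{n\ge1}n^2|a_n|^2r^{2n},
\]
since \(zp_*'/p_*=zg'\). It therefore suffices to find a bounded-imaginary-part \(g\) whose coefficients make this weighted sum exceed \(C(1-r)^{-\beta}\) along some sequence \(r_k\to1\), for every \(\beta<2\). The upper bound of Theorem~\ref{thm:upper} shows we cannot beat \((1-r)^{-2}\), so the construction must come close to saturating it.

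I would take a lacunary series with slowly decaying coefficients. Choose \(g=\frac{2}{\pi}\cdot\frac{\pi}{2}\cdot h\), where \(h\) is a bounded analytic function, say \(\|h\|_\infty\le 1\), so \(|\Im g|\le\pi/2\cdot\) a constant \(<\pi/2\) after scaling. Bounded analytic functions can have coefficients \(c_n\) with \(\sum|c_n|^2<\infty\) decaying arbitrarily slowly. For example, a Hadamard-lacunary series \(h(z)=\sum_k \varepsilon_k z^{n_k}\) with \(n_{k+1}/n_k\ge q\) large is bounded iff \(\sum|\varepsilon_k|<\infty\) would be too crude. Instead, I would use the fact that for lacunary series, \(\sum|\varepsilon_k|^2<\infty\) gives a BMO/Bloch function. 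That yields bounded imaginary part only after a conformal modification, so a cleaner route is the direct one: take \(\varepsilon_k=1/k^2\), giving \(\sum|\varepsilon_k|<\infty\), so \(h\) is bounded. Then at \(r_k=1-1/n_k\),
\[
I(r_k)\ge 2\pi n_k^2\varepsilon_k^2 r_k^{2n_k}\asymp n_k^2/k^4.
\]
Choosing \(n_k=2^{2^k}\) (super-lacunary), we get \(k^4\approx(\log\log n_k)^4\). Hence \(I(r_k)\gtrsim(1-r_k)^{-2}(\log\log\frac1{1-r_k})^{-4}\), which is not \(O((1-r)^{-\beta})\) for any \(\beta<2\). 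A single function handles all \(\beta\) simultaneously.

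The steps are as follows.
\begin{enumerate}[label=(\roman*)]
\item Define \(h(z)=\sum_k k^{-2}z^{n_k}\) with \(n_k=2^{2^k}\). Then \(|h|\le\sum k^{-2}=\pi^2/6\), so \(g:=\frac{3}{\pi^2}\cdot\frac{\pi}{2}\cdot h\cdot\frac{2}{\pi}\cdot\frac{\pi}{2}\), or more simply \(g=\frac{\pi}{2}\cdot\frac{6}{\pi^2}\cdot\frac{1}{2}h\), satisfies \(|g|<\pi/2\). Hence \(p_*=e^{g}\) has \(|\arg p_*|<\pi/2\), so \(p_*\in\Pclass\) and \(p_*(0)=1\).
\item Apply Parseval as above and keep only the single term \(n=n_k\).
\item Evaluate at \(r_k=1-1/n_k\), using \(r_k^{2n_k}\to e^{-2}\).
\item Compare with \((1-r_k)^{-\beta}=n_k^{\beta}\). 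The ratio \(n_k^{2-\beta}/k^4\to\infty\), since \(n_k\) grows doubly exponentially in \(k\).
\end{enumerate}

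The main obstacle is verifying membership in \(\Pclass\) with honest coefficient decay. Absolute summability is the simplest sufficient condition, and the doubly-exponential gaps make the polynomial coefficient loss \(k^{-4}\) negligible against any power \(n_k^{2-\beta}\). I expect no further difficulty.
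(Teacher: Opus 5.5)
Your construction is correct and is essentially the paper's: a lacunary series with absolutely summable coefficients $k^{-2}$, scaled so that $|\operatorname{Im} g|<\pi/2$, exponentiated, then Parseval with only the single term $n=n_k$ retained at a radius adapted to $n_k$. The differences are cosmetic: you use a real multiplier instead of $\tfrac{i}{2}$, and $r_k=1-1/n_k$ instead of $e^{-1/n_k}$. Your doubly exponential gaps $n_k=2^{2^k}$ are also more than needed, since the paper's $n_k=2^k$ already gives $2^{(2-\beta)k}/k^4\to\infty$.
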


We then strengthen this sharpness statement from powers to arbitrary gauges.
The following theorem shows that \((1-r)^{-2}\) is optimal not only among power
functions, but among all universal comparison functions up to little-\(o\).

\begin{theorem}\label{thm:gauge}
Let \(\Phi:(0,1)\to(0,\infty)\) satisfy
\[
\Phi(r)=o\bigl((1-r)^{-2}\bigr)
\qquad (r\to1^-).
\]
Then there exists a function \(p_\Phi\in\Pclass\) such that
\[
I_{p_\Phi}(r)\neq O\bigl(\Phi(r)\bigr)
\qquad (r\to1^-).
\]
Equivalently,
\[
\limsup_{r\to1^-}\frac{I_{p_\Phi}(r)}{\Phi(r)}=+\infty.
\]
\end{theorem}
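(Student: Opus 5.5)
Given \(\Phi=o((1-r)^{-2})\), we build \(p_\Phi=\exp(h)\), where \(h\) is holomorphic with \(|\operatorname{Im}h|<\pi/2\) on \(\D\). This guarantees \(p_\Phi\in\Pclass\), and \(zp_\Phi'/p_\Phi=zh'\). Parseval then gives \(I_{p_\Phi}(r)=2\pi\sum n^2|a_n|^2r^{2n}\), where \(h=\sum a_nz^n\). We aim for a lacunary series with real-valued imaginary part control:
\[
h(z)=\sum_{k\ge1} i c_k z^{n_k},\qquad c_k>0,\ \sum_k c_k<\tfrac{\pi}{2},
\]
so that \(|\operatorname{Im}h|\le\sum c_k<\pi/2\) on \(\D\). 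Then
\[
I_{p_\Phi}(r)=2\pi\sum_k n_k^2c_k^2r^{2n_k}\ \ge\ 2\pi n_k^2c_k^2 r^{2n_k}
\]
for each single \(k\).

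Next we evaluate this at \(r_k:=1-1/n_k\). There \(r_k^{2n_k}\ge e^{-4}\) for \(n_k\ge2\), and \(n_k^2=(1-r_k)^{-2}\). Hence
\[
\frac{I_{p_\Phi}(r_k)}{\Phi(r_k)}\ \ge\ 2\pi e^{-4}c_k^2\,\frac{(1-r_k)^{-2}}{\Phi(r_k)}.
\]
Write \(\varepsilon(r):=\Phi(r)(1-r)^2\), which tends to \(0\) as \(r\to1^-\). We choose \(c_k:=\tfrac{1}{2}\cdot 2^{-k}\), so that \(\sum c_k=\tfrac12<\pi/2\).

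We then choose \(n_k\) inductively. Take \(n_k\) increasing and so large that \(\varepsilon(1-1/n_k)\le 4^{-k}\,k^{-1}\cdot(\text{const})\), for instance \(\varepsilon(1-1/n_k)\le 4^{-2k}\). This is possible since \(\varepsilon\to0\). The ratio is then at least \(2\pi e^{-4}\cdot\tfrac14\cdot 4^{-k}\cdot 4^{2k}\to\infty\). The points \(r_k\to1\), so \(\limsup I_{p_\Phi}/\Phi=\infty\). No lacunarity gap condition is actually needed, since we only use positivity of the terms in Parseval's sum. One may still take \(n_{k+1}>n_k\) so that the exponents are distinct and the coefficients do not combine.

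The only point needing care is that \(p_\Phi=e^h\) is in \(\Pclass\): \(|\arg p_\Phi|=|\operatorname{Im}h|<\pi/2\) gives \(\Re p_\Phi>0\). We also need \(h\) holomorphic, which holds because \(\sum c_k<\infty\) gives uniform convergence on compacta. Moreover \(p_\Phi(0)=1\).

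The main (mild) obstacle is just the quantitative matching between the summable coefficients \(c_k\), forced by the half-plane constraint, and the arbitrarily slow decay of \(\varepsilon\). This is resolved by choosing \(n_k\) after \(c_k\), letting the decay of \(\varepsilon\) outrun \(c_k^2\). Note that this construction, with \(\Phi(r)=(1-r)^{-\beta}\), also recovers Theorem~\ref{thm:sharp} for a single function if we choose \(n_k\) against \(\Phi_k(r)=(1-r)^{-2+1/k}\) at stage \(k\).
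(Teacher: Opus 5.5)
Your proof is correct and follows essentially the same route as the paper: you set $p_\Phi=\exp\bigl(i\sum_k c_k z^{n_k}\bigr)$ with summable $c_k>0$, choose $n_k$ inductively so that $\Phi(r_k)(1-r_k)^2$ is small compared with $c_k^2$ at radii with $n_k(1-r_k)\approx 1$, and keep a single term of the Parseval series. The differences from the paper's construction are cosmetic: you use $c_k=2^{-k-1}$, $r_k=1-1/n_k$ and threshold $4^{-2k}$, where the paper uses $c_k=1/(2k^2)$, $r_k=e^{-1/n_k}$ and the condition $\Phi(e^{-1/n_k})\le n_k^2/k^8$.
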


Combining these three results, we obtain a complete answer to
Problem~\ref{prob:HL} and a precise formulation of the optimal universal growth
scale for the family \(\{I_p:p\in\Pclass\}\).

\begin{corollary}\label{cor:main}
Problem~\ref{prob:HL} has a negative answer. Moreover, \((1-r)^{-2}\) is the
optimal universal growth scale for the family \(\{I_p:p\in\Pclass\}\) in the
following sense:
\begin{enumerate}[label=\textup{(\roman*)}]
\item there exists an absolute constant \(C>0\) such that
\[
I_p(r)\le C(1-r)^{-2},
\qquad p\in\Pclass,\quad 0<r<1;
\]
\item for each fixed \(p\in\Pclass\),
\[
I_p(r)=o\bigl((1-r)^{-2}\bigr)
\qquad (r\to1^-);
\]
\item if \(\Phi:(0,1)\to(0,\infty)\) satisfies
\[
\Phi(r)=o\bigl((1-r)^{-2}\bigr)
\qquad (r\to1^-),
\]
then there exists a function \(p_\Phi\in\Pclass\) such that
\[
I_{p_\Phi}(r)\neq O\bigl(\Phi(r)\bigr)
\qquad (r\to1^-);
\]
\item in particular, \(2\) is the least exponent \(\alpha\) for which a uniform estimate
\[
I_p(r)\le C_\alpha (1-r)^{-\alpha},
\qquad p\in\Pclass,\quad 0<r<1,
\]
can hold with a constant \(C_\alpha\) independent of \(p\).
\end{enumerate}
\end{corollary}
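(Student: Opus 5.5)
The corollary follows by assembling Theorems~\ref{thm:upper}, \ref{thm:sharp} and \ref{thm:gauge}; the only new work is checking the logical implications. I would go through the items in order.

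For (i), I take \(C=\pi^3e^{-2}\). Then the first part of Theorem~\ref{thm:upper} gives \(I_p(r)\le C(1-r)^{-2}\) for every \(p\in\Pclass\) and \(0<r<1\), with \(C\) absolute. Item (ii) is the second assertion of Theorem~\ref{thm:upper}. Item (iii) is Theorem~\ref{thm:gauge} verbatim.

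For the negative answer to Problem~\ref{prob:HL}, I apply (iii) with \(\Phi(r)=(1-r)^{-1}\), which is \(o((1-r)^{-2})\). This produces \(p_\Phi\in\Pclass\) with \(I_{p_\Phi}(r)\neq O((1-r)^{-1})\). Alternatively, Theorem~\ref{thm:sharp} with \(\beta=1\) gives the same conclusion. Either way, the bound asked for in the problem fails for some member of the class, so the answer is negative.

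For (iv), item (i) shows that \(\alpha=2\) is admissible. For minimality, let \(\alpha<2\) and suppose a uniform estimate \(I_p(r)\le C_\alpha(1-r)^{-\alpha}\) held for all \(p\in\Pclass\). Applied to \(p_*\) from Theorem~\ref{thm:sharp} with \(\beta=\alpha\), it would give \(I_{p_*}(r)=O((1-r)^{-\alpha})\), a contradiction. So no exponent \(\alpha<2\) works.

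Exponents \(\alpha>2\) need a remark, because "least" must refer to the set of admissible exponents. For \(\alpha\ge 2\) one has \((1-r)^{-2}\le(1-r)^{-\alpha}\) on \((0,1)\), so the set of admissible exponents is exactly \([2,\infty)\), and its least element is \(2\).

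The "optimal scale" phrasing then packages (i) together with (iii). The gauge \((1-r)^{-2}\) works uniformly over \(\Pclass\), while any gauge that is \(o((1-r)^{-2})\) fails for some \(p\in\Pclass\), even though by (ii) each individual \(p\) is \(o((1-r)^{-2})\).

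There is no real obstacle here. The only point needing care is distinguishing the per-function little-\(o\) statement in (ii) from the class-wide failure in (iii), and noting that these are consistent because the function \(p_\Phi\) depends on \(\Phi\).
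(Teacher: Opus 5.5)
Your proposal is correct and follows the paper's proof: (i)--(iii) are read off directly from Theorems~\ref{thm:upper} and~\ref{thm:gauge}, and (iv) combines (i) with the failure of every exponent \(\alpha<2\). The differences are minor. For (iv) you use the single function \(p_*\) of Theorem~\ref{thm:sharp}, whereas the paper applies (iii) to \(\Phi(r)=(1-r)^{-\alpha}\). You also spell out the negative answer to Problem~\ref{prob:HL} and the admissibility of \(\alpha>2\), which the paper leaves implicit.
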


The proof of Corollary~\ref{cor:main} will be given after
Theorem~\ref{thm:gauge}.

\medskip
\noindent\textbf{Organization of this paper.}
Section~\ref{sec:h2} establishes an \(H^2\)-estimate for \(\Log p\).
In Section~\ref{sec:upper} we combine this estimate with Parseval's identity to
prove Theorem~\ref{thm:upper}. Section~\ref{sec:sharp} gives an explicit
lacunary construction proving Theorem~\ref{thm:sharp}. In
Section~\ref{sec:gauge} we prove Theorem~\ref{thm:gauge} and then derive
Corollary~\ref{cor:main}.

\section{An \texorpdfstring{$H^2$}{H2}-estimate for the nonconstant part of \texorpdfstring{$\Log p$}{Log p}}\label{sec:h2}

Fix \(p\in\Pclass\). Since \(p(\D)\) is contained in the right half-plane, the principal branch of the logarithm is analytic there. We may therefore write
\[
F(z):=\Log p(z)=a_0+\sum_{n=1}^{\infty} a_n z^n,
\qquad z\in\D.
\]
Then
\[
F'(z)=\frac{p'(z)}{p(z)}
\qquad\text{and}\qquad
|\operatorname{Im} F(z)|<\frac{\pi}{2}
\quad (z\in\D).
\]
The bounded imaginary part of \(F=\Log p\) yields the following coefficient estimate. Recall that \(H^2\) denotes the classical Hardy space on \(\D\), consisting of those functions
\[
f(z)=\sum_{n=0}^{\infty} b_n z^n\in H(\D)
\]
for which
\[
\|f\|_{H^2}^2
:=\sup_{0<r<1}\frac{1}{2\pi}\int_0^{2\pi}|f(re^{i\theta})|^2\,d\theta
<\infty.
\]
Equivalently,
\[
f\in H^2 \quad\Longleftrightarrow\quad \sum_{n=0}^{\infty}|b_n|^2<\infty,
\]
and in that case one has \(\|f\|_{H^2}^2=\sum_{n=0}^{\infty}|b_n|^2\).

\begin{lemma}\label{lem:H2}
With the notation above,
\[
\sum_{n=1}^{\infty}|a_n|^2\le \frac{\pi^2}{2}.
\]
In particular, \(F\in H^2\).
\end{lemma}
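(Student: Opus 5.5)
Let \(v:=\operatorname{Im}F\), which is harmonic on \(\D\) with \(|v|<\pi/2\). The plan is to express the nonconstant coefficients of \(F\) through the Fourier coefficients of \(v\) on circles and then apply Parseval to \(v\) itself. Write \(a_n=\alpha_n+i\beta_n\) for \(n\ge0\). Then on \(|z|=r\),
\[
v(re^{i\theta})=\beta_0+\sum_{n=1}^{\infty}\operatorname{Im}\bigl(a_n r^n e^{in\theta}\bigr)
=\beta_0+\sum_{n=1}^{\infty}\frac{r^n}{2i}\bigl(a_n e^{in\theta}-\overline{a_n}e^{-in\theta}\bigr).
\]
So the \(n\)-th Fourier coefficient of \(v(re^{i\cdot})\) is \(a_n r^n/(2i)\) for \(n\ge1\), the \((-n)\)-th is \(-\overline{a_n}r^n/(2i)\), and the zeroth is \(\beta_0\). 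The series converges absolutely and uniformly for fixed \(r<1\), so these identifications are legitimate.

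By Parseval on the circle of radius \(r\),
\[
\frac{1}{2\pi}\int_0^{2\pi}v(re^{i\theta})^2\,d\theta=\beta_0^2+\frac12\sum_{n=1}^{\infty}|a_n|^2r^{2n}.
\]
The left side is at most \(\pi^2/4\) because \(|v|<\pi/2\). Dropping \(\beta_0^2\ge0\) gives
\[
\sum_{n\ge1}|a_n|^2r^{2n}\le \frac{\pi^2}{2}
\]
for every \(r<1\). Letting \(r\to1^-\) and using monotone convergence yields \(\sum_{n\ge1}|a_n|^2\le\pi^2/2\). In particular \(\sum_{n\ge0}|a_n|^2<\infty\), so \(F\in H^2\) by the coefficient characterization recalled above.

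There is no real obstacle here. The only point requiring care is the factor bookkeeping: a real harmonic function carries half of \(|a_n|^2\) on each of the frequencies \(\pm n\), which is what produces the constant \(\pi^2/2\) rather than \(\pi^2/4\). Discarding \(\beta_0\) is harmless; keeping it would give the slightly sharper bound \(\sum_{n\ge1}|a_n|^2\le \pi^2/2-2(\operatorname{Im}a_0)^2\), but that refinement is not needed.
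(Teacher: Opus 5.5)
Your proposal is correct and follows the paper's proof essentially verbatim: expand \(\operatorname{Im}F(re^{i\theta})\) as a Fourier series, apply Parseval to get \((\operatorname{Im}a_0)^2+\tfrac12\sum_{n\ge1}|a_n|^2r^{2n}\le\pi^2/4\), then let \(r\to1^-\) by monotone convergence. Your bookkeeping of the \(\pm n\) frequencies and the optional sharper bound \(\pi^2/2-2(\operatorname{Im}a_0)^2\) are both accurate.
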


\begin{proof}
For \(0<r<1\), write \(z=re^{i\theta}\). Since
\[
F(z)=a_0+\sum_{n=1}^{\infty} a_n r^n e^{in\theta},
\]
we obtain
\[
\operatorname{Im} F(re^{i\theta})
=\operatorname{Im} a_0+
\frac{1}{2i}\sum_{n=1}^{\infty} a_n r^n e^{in\theta}
-\frac{1}{2i}\sum_{n=1}^{\infty} \overline{a_n}\,r^n e^{-in\theta}.
\]
Applying Parseval's identity to this Fourier series yields
\[
\frac{1}{2\pi}\int_0^{2\pi}|\operatorname{Im} F(re^{i\theta})|^2\,d\theta
=(\operatorname{Im} a_0)^2+\frac12\sum_{n=1}^{\infty}|a_n|^2 r^{2n}.
\]
On the other hand, \(|\operatorname{Im} F(re^{i\theta})|<\pi/2\), so
\[
\frac{1}{2\pi}\int_0^{2\pi}|\operatorname{Im} F(re^{i\theta})|^2\,d\theta
\le \frac{\pi^2}{4}.
\]
Combining the last two identities gives
\[
(\operatorname{Im} a_0)^2+\frac12\sum_{n=1}^{\infty}|a_n|^2 r^{2n}\le \frac{\pi^2}{4}
\qquad (0<r<1).
\]
Letting \(r\to1^-\) and using monotone convergence, we conclude that
\[
\sum_{n=1}^{\infty}|a_n|^2\le \frac{\pi^2}{2}.
\]
This proves the lemma.
\end{proof}

The lemma provides the coefficient estimate needed for the proof of
Theorem~\ref{thm:upper}. We now turn to the integral means \(I_p(r)\).

\section{A uniform upper bound and a fixed-function refinement}\label{sec:upper}
Fix \(p\in\Pclass\), and write
\[
F(z)=\Log p(z)=a_0+\sum_{n=1}^{\infty} a_n z^n,
\qquad z\in\D.
\]
We now return to the quantity appearing in Problem~\ref{prob:HL}. In terms of \(F=\Log p\), Parseval's identity gives an exact series representation for \(I_p(r)\).

\begin{proposition}\label{prop:parseval}
For every \(0<r<1\),
\[
I_p(r)=2\pi\sum_{n=1}^{\infty} n^2 |a_n|^2 r^{2n}.
\]
\end{proposition}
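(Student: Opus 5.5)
The plan is to express $zp'(z)/p(z)$ as a power series in $z$ through $F=\Log p$, and then apply Parseval's identity on the circle $|z|=r$.

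Since $p(\D)$ lies in the right half-plane, $F=\Log p$ is holomorphic on $\D$ and satisfies $F'=p'/p$. Differentiating the expansion $F(z)=a_0+\sum_{n\ge1}a_nz^n$ term by term is legitimate, because a power series converges locally uniformly inside its disc of convergence, which here contains $\D$. This gives
\[
\frac{zp'(z)}{p(z)}=zF'(z)=\sum_{n=1}^{\infty}na_nz^n,\qquad z\in\D.
\]

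Now fix $0<r<1$ and put $z=re^{i\theta}$. Then $zF'(re^{i\theta})=\sum_{n\ge1}na_nr^ne^{in\theta}$. The radius of convergence of $\sum na_nz^n$ is at least $1$, so $\sum n|a_n|r^n<\infty$. The series therefore converges absolutely and uniformly in $\theta$, and in particular it converges in $L^2(0,2\pi)$.

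By the orthogonality relation $\int_0^{2\pi}e^{i(n-m)\theta}\,d\theta=2\pi\delta_{nm}$, Parseval's identity applies. Alternatively, one can expand $|\cdot|^2$ as a double sum and integrate term by term, which uniform convergence justifies. Either way,
\[
I_p(r)=\int_0^{2\pi}\Bigl|\sum_{n\ge1}na_nr^ne^{in\theta}\Bigr|^2d\theta=2\pi\sum_{n=1}^{\infty}n^2|a_n|^2r^{2n}.
\]

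There is no real obstacle here. The only point needing care is the interchange of summation and integration, and this is handled by absolute convergence for fixed $r<1$; Lemma~\ref{lem:H2} is not needed. The constant term $a_0$ drops out on differentiation, so $\operatorname{Im}p(0)$ or any normalization of $p(0)$ plays no role.
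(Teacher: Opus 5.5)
Your proposal is correct and follows the paper's proof exactly: write $zp'(z)/p(z)=zF'(z)=\sum_{n\ge1}na_nz^n$ and apply Parseval's identity on $|z|=r$. Your extra justification of termwise differentiation, and of swapping sum and integral via absolute convergence for fixed $r<1$, makes explicit what the paper leaves implicit.
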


\begin{proof}
Since
\[
\frac{z p'(z)}{p(z)}=zF'(z)=\sum_{n=1}^{\infty} n a_n z^n,
\]
Parseval's identity gives
\[
I_p(r)=\int_0^{2\pi}\left|\sum_{n=1}^{\infty} n a_n r^n e^{in\theta}\right|^2\, d\theta
=2\pi\sum_{n=1}^{\infty} n^2 |a_n|^2 r^{2n}.
\qedhere\]
\end{proof}

Thus the proof of Theorem~\ref{thm:upper} reduces to estimating the weighted series on the right-hand side. Combined with Lemma~\ref{lem:H2}, this yields a uniform \(O((1-r)^{-2})\) bound over \(\Pclass\), and a standard tail argument then gives the little-\(o\) refinement for each fixed function.


\begin{proof}[Proof of Theorem~\ref{thm:upper}]
By Proposition~\ref{prop:parseval},
\begin{equation}\label{eq:Ip-series}
I_p(r)=2\pi\sum_{n=1}^{\infty} n^2 |a_n|^2 r^{2n}.
\end{equation}
Since
\[
r^n\le e^{-n(1-r)} \qquad (0<r<1),
\]
we have
\[
(1-r)^2 n^2 r^{2n}
\le \bigl(n(1-r)\bigr)^2 e^{-2n(1-r)}
\le \sup_{x\ge 0} x^2 e^{-2x}
= e^{-2}.
\]
Therefore, by \eqref{eq:Ip-series} and Lemma~\ref{lem:H2},
\[
I_p(r)
\le \frac{2\pi e^{-2}}{(1-r)^2}\sum_{n=1}^{\infty}|a_n|^2
\le \frac{\pi^3 e^{-2}}{(1-r)^2},
\qquad p\in\Pclass,\quad 0<r<1.
\]
This proves the stated uniform estimate.

To obtain the little-\(o\) refinement for a fixed \(p\), let \(\varepsilon>0\).
By Lemma~\ref{lem:H2}, we can choose \(N\) so large that
\[
\sum_{n>N}|a_n|^2<\varepsilon.
\]
Splitting the series in \eqref{eq:Ip-series}, we get
\[
(1-r)^2 I_p(r)
=2\pi (1-r)^2\sum_{n\le N} n^2|a_n|^2 r^{2n}
+2\pi (1-r)^2\sum_{n>N} n^2|a_n|^2 r^{2n}.
\]
The first term tends to \(0\) as \(r\to1^-\), because the sum is finite. For the tail we use the estimate above:
\[
(1-r)^2\sum_{n>N} n^2|a_n|^2 r^{2n}
\le e^{-2}\sum_{n>N}|a_n|^2
<e^{-2}\varepsilon.
\]
Hence
\[
\limsup_{r\to1^-}(1-r)^2 I_p(r)\le 2\pi e^{-2}\varepsilon.
\]
Since \(\varepsilon>0\) is arbitrary, it follows that
\[
I_p(r)=o\bigl((1-r)^{-2}\bigr)
\qquad (r\to1^-).
\qedhere\]
\end{proof}

This settles the upper-bound side of Problem~\ref{prob:HL}: the family
\(\{I_p:p\in\Pclass\}\) satisfies the uniform estimate
\(I_p(r)=O\bigl((1-r)^{-2}\bigr)\), while each fixed function enjoys the sharper
little-\(o\) refinement. The next section shows that the exponent \(2\) cannot
be improved on the power scale.

\section{Sharpness on the power scale}\label{sec:sharp}

\begin{proof}[Proof of Theorem~\ref{thm:sharp}]
Define
\[
F_*(z):=\frac{i}{2}\sum_{k=1}^{\infty}\frac{z^{2^k}}{k^2},
\qquad
p_*(z):=e^{F_*(z)}.
\]
Since \(\sum_{k\ge1}k^{-2}<\infty\), the series for \(F_*\) converges absolutely and uniformly on \(\overline{\D}\). Moreover,
\[
|\operatorname{Im} F_*(z)|
\le \frac12\sum_{k=1}^{\infty}\frac{|z|^{2^k}}{k^2}
\le \frac12\sum_{k=1}^{\infty}\frac{1}{k^2}
=\frac{\pi^2}{12}<\frac{\pi}{2}.
\]
Therefore
\[
\operatorname{Re} p_*(z)=e^{\operatorname{Re} F_*(z)}\cos(\operatorname{Im} F_*(z))>0,
\qquad z\in\D,
\]
so \(p_*\in\Pclass\).

Also,
\[
\frac{z p_*'(z)}{p_*(z)}=zF_*'(z)
=\frac{i}{2}\sum_{k=1}^{\infty}\frac{2^k}{k^2}z^{2^k}.
\]
Hence Parseval's identity yields
\begin{equation}\label{eq:counterexample-series}
I_{p_*}(r)=2\pi\sum_{k=1}^{\infty}\frac{4^{k-1}}{k^4}r^{2^{k+1}}.
\end{equation}

To extract the growth from \eqref{eq:counterexample-series}, we choose radii adapted to the exponents:
\[
r_k:=e^{-2^{-k}}, \qquad k\ge1.
\]
Then
\[
r_k^{2^{k+1}}=e^{-2}.
\]
Moreover, since \(1-e^{-x}\le x\) for \(x\ge0\) and \(1-e^{-x}\ge x/2\) for \(0<x\le1\), we have
\[
2^{-k-1}\le 1-r_k\le 2^{-k}
\qquad (k\ge1).
\]
Fix \(\beta<2\). From these inequalities it follows that there exists a constant \(c_\beta>0\) such that
\[
(1-r_k)^\beta\ge c_\beta\,2^{-k\beta}
\qquad (k\ge1).
\]
Now keep only the \(k\)th term in \eqref{eq:counterexample-series}. This gives
\[
I_{p_*}(r_k)
\ge 2\pi\frac{4^{k-1}}{k^4}r_k^{2^{k+1}}
=2\pi e^{-2}\frac{4^{k-1}}{k^4}.
\]
Consequently,
\[
I_{p_*}(r_k)(1-r_k)^\beta
\ge 2\pi e^{-2}\frac{4^{k-1}}{k^4}\,c_\beta 2^{-k\beta}
= C_\beta\,\frac{2^{(2-\beta)k}}{k^4},
\]
where \(C_\beta>0\) is independent of \(k\). Since \(2-\beta>0\), the right-hand side tends to \(+\infty\) as \(k\to\infty\). Therefore
\[
I_{p_*}(r)\neq O\bigl((1-r)^{-\beta}\bigr)
\qquad (r\to1^-)
\]
for every real \(\beta<2\).
\end{proof}

The function \(p_*\) shows that the exponent \(2\) is optimal on the power
scale. We now strengthen this by proving that no universal gauge
\(o((1-r)^{-2})\) can hold on the whole class \(\Pclass\).

\section{Optimality of the universal gauge}\label{sec:gauge}

\begin{proof}[Proof of Theorem~\ref{thm:gauge}]
For \(n\ge1\), set
\[
\rho_n:=e^{-1/n}.
\]
Since \(1-\rho_n=1-e^{-1/n}\sim 1/n\), the hypothesis
\[
\Phi(r)=o\bigl((1-r)^{-2}\bigr)
\qquad (r\to1^-)
\]
implies
\[
\frac{\Phi(\rho_n)}{n^2}\longrightarrow 0
\qquad (n\to\infty).
\]
Hence we may choose inductively a strictly increasing sequence of integers \(n_k\to\infty\) such that
\begin{equation}\label{eq:choose-nk}
\Phi\bigl(e^{-1/n_k}\bigr)\le \frac{n_k^2}{k^8}
\qquad (k\ge1).
\end{equation}
Define
\[
F_\Phi(z):=\frac{i}{2}\sum_{k=1}^{\infty}\frac{z^{n_k}}{k^2},
\qquad
p_\Phi(z):=e^{F_\Phi(z)}.
\]
Again the series converges absolutely and uniformly on \(\overline{\D}\), and
\[
|\operatorname{Im} F_\Phi(z)|
\le \frac12\sum_{k=1}^{\infty}\frac{|z|^{n_k}}{k^2}
\le \frac12\sum_{k=1}^{\infty}\frac1{k^2}
=\frac{\pi^2}{12}<\frac{\pi}{2}.
\]
Therefore
\[
\operatorname{Re} p_\Phi(z)=e^{\operatorname{Re} F_\Phi(z)}\cos(\operatorname{Im} F_\Phi(z))>0,
\qquad z\in\D,
\]
so \(p_\Phi\in\Pclass\).

Since \(F_\Phi\) is a power series with radius of convergence at least \(1\), termwise differentiation is valid in \(\D\), and hence
\[
\frac{z p_\Phi'(z)}{p_\Phi(z)}=zF_\Phi'(z)
=\frac{i}{2}\sum_{k=1}^{\infty}\frac{n_k}{k^2}z^{n_k}.
\]
Therefore Parseval's identity gives
\begin{equation}\label{eq:gauge-series}
I_{p_\Phi}(r)=\frac{\pi}{2}\sum_{k=1}^{\infty}\frac{n_k^2}{k^4}r^{2n_k}.
\end{equation}
Now put
\[
r_k:=e^{-1/n_k}.
\]
From \eqref{eq:gauge-series}, keeping only the \(k\)th term, we obtain
\[
I_{p_\Phi}(r_k)
\ge \frac{\pi}{2}\frac{n_k^2}{k^4}r_k^{2n_k}
=\frac{\pi e^{-2}}{2}\frac{n_k^2}{k^4}.
\]
Combining this with \eqref{eq:choose-nk}, we find
\[
\frac{I_{p_\Phi}(r_k)}{\Phi(r_k)}
\ge \frac{\pi e^{-2}}{2}\,
\frac{n_k^2/k^4}{n_k^2/k^8}
=\frac{\pi e^{-2}}{2}k^4.
\]
Hence
\[
\limsup_{k\to\infty}\frac{I_{p_\Phi}(r_k)}{\Phi(r_k)}=+\infty,
\]
and therefore
\[
\limsup_{r\to1^-}\frac{I_{p_\Phi}(r)}{\Phi(r)}=+\infty.
\]
This proves Theorem~\ref{thm:gauge}.
\end{proof}

\begin{proof}[Proof of Corollary~\ref{cor:main}]
Part~\textup{(i)} is the uniform estimate from Theorem~\ref{thm:upper}, and
part~\textup{(ii)} is its little-\(o\) refinement for fixed functions.
Part~\textup{(iii)} is exactly Theorem~\ref{thm:gauge}. Finally,
part~\textup{(iv)} follows from part~\textup{(i)} with \(\alpha=2\), while
part~\textup{(iii)} applied to \(\Phi(r)=(1-r)^{-\alpha}\) shows that no
exponent \(\alpha<2\) can work.
\end{proof}

\begin{remark}
Corollary~\ref{cor:main} concerns universal bounds for the family
\(\{I_p:p\in\Pclass\}\), rather than a single common growth law for all
functions in \(\Pclass\). Indeed, for the M\"obius map \(p_0(z)=(1+z)/(1-z)\), one has 
\[I_{p_0}(r)\asymp (1-r)^{-1},\]
whereas \(I_{p_*}(r)\) from Theorem~\ref{thm:sharp} grows, along a suitable sequence,
faster than \((1-r)^{-\beta}\) for every \(\beta<2\). Thus different functions in \(\Pclass\) may exhibit substantially different
growth behavior near the boundary, even though \((1-r)^{-2}\) is the optimal
universal scale for the class as a whole.
\end{remark}

\section*{Acknowledgments}
Teng Zhang is supported by the China Scholarship Council, the Young Elite Scientists Sponsorship Program for PhD Students (China Association for Science and Technology), and the Fundamental Research Funds for the Central Universities at Xi'an Jiaotong University (Grant No.~xzy022024045).

\end{document}